\newenvironment{red}{\color{red}}{}
\newcommand{\bred}{\begin{red}}
\newcommand{\ered}{\end{red}}
\newcommand\sO{{\mathcal O}}
\newcommand\sC{{\mathcal C}}
\newcommand\bZ{{\mathbb Z}}
\newcommand\bP{{\mathbb P}}
\numberwithin{equation}{section}
\newtheorem{theorem}[equation]{Theorem}
\newtheorem*{theorem*}{Theorem}
\newtheorem{proposition}[equation]{Proposition}
\theoremstyle{remark}
\newtheorem*{problem*}{Problem}
\newcommand{\Wedge}[1]{{\textstyle{\bigwedge\nolimits}^{\! #1}}}
\title {Contact Moishezon threefolds with second Betti number one}
\author{Jaros\l{}aw Buczy\'nski \and Thomas Peternell}
\date{March 21, 2012}
\begin{document}


\begin{abstract}
 We prove that the only contact Moishezon threefold having second Betti number equal to one is the projective space.
\end{abstract}

\maketitle

\section{Introduction}

A compact complex manifold $X$ of dimension $2n+1$ is said to be a \emph{contact manifold} if there is a vector bundle sequence
$$ 0 \to F \to T_X \to L \to 0 $$
where $T_X$ is the tangent bundle of $X$ and $F$ a sub-bundle of rank $2n$ such that the induced map
$$\Wedge{2} F \to L = T_X/F, \  v \wedge w \mapsto [v,w]/F $$
is everywhere non-degenerate.
Properly speaking $(X,F)$ is a contact manifold;
 the line bundle $L$ is called the \emph{contact line bundle}.
It is easy to see that  $-K_X =  (n+1) L$, that is in case $X$ is a threefold:
 \begin{equation} \label{equ_minus_KX_eq_2L}
    -K_X = 2 L.
 \end{equation}
We refer e.g.~to \cite{Le95} and \cite{Bu09} for details.
There are basically two methods to construct compact contact manifolds.
\begin{itemize}
\item A simple Lie group gives rise to a Fano contact manifold $X$
      by taking the unique closed orbit for the adjoint
      action of the Lie group on the the projectivised Lie algebra; we refer to \cite{Be98}.
     Unless the group is of type $A$, we have $b_2(X) = 1$.
      Specifically, this construction includes $\bP^{2n+1}$, $\bP(T_{\bP^{n+1}})$, Grassmannians of lines on quadrics,
           and some exceptional homogeneous spaces.
\item given any compact complex manifold $M$, the projectivised tangent bundle $\bP(T_M)$ is a contact manifold.
\end{itemize}

A famous conjecture of LeBrun and Salamon \cite{LS94} claims there are no other {\it projective} contact manifolds.
If $b_2(X) \geq 2,$ this is settled by \cite{KPSW00} and \cite{De02}. For results in case $b_2(X) = 1, $ we refer to
\cite{Be98}, \cite{Bu09}, \cite{Bu10}, \cite{Ke01}, \cite{Ke05}, \cite{Le95} and \cite{Wi00}.
Since there is no known example of a compact contact manifold not in the above list, one might wonder whether the projectivity
assumption in the conjecture of LeBrun and Salamon is really necessary. Dropping the projectivity assumption, it seems reasonable
to assume first that $X$ is not too wild, i.e.~$X$ is in class $\sC, $ which is to say that $X$ is bimeromorphic to a
compact K\"ahler manifold.

In \cite{FP11} it has been shown that a contact threefold in class $\sC$ which is not rationally connected must be of the form
$X = \bP(T_M)$ with a K\"ahler surface $M.$ Thus it remains in dimension 3 to treat rationally connected varieties in class $\sC.$
Notice that these are automatically Moishezon spaces, i.e., carry three algebraically independent meromorphic functions,
  see Proposition~\ref{prop_rationally_connected_in_class_C}.
In fact, the rational connectedness
of $X$ implies that $H^2(X,\sO_X) = 0.$
In this short note we treat the case that $b_2(X) = 1.$

\begin{theorem}
Let $X$ be a smooth threefold in class $\sC$ with $b_2(X) = 1$, which is contact for some choice of $F \subset T X$.
Then $X \simeq \bP^3$.
\end{theorem}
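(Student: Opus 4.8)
The plan is to promote the Moishezon hypothesis to projectivity, showing that $X$ is a Fano threefold of Picard number one, and then to pin down $X\simeq\bP^3$ by a Chern-class identity combined with the Kobayashi--Ochiai theorem. Since $b_2(X)=1$, the structure result of \cite{FP11} rules out $X\simeq\bP(T_M)$, so $X$ is rationally connected and hence Moishezon with $H^i(X,\sO_X)=0$ for $i>0$; in particular $\chi(\sO_X)=1$ and $b_1(X)=0$. By Poincar\'e duality the Betti numbers are $1,0,1,b_3,1,0,1$, so the topological Euler number satisfies $e(X)=4-b_3(X)\le 4$. Moreover $H^2(X,\sO_X)=0$ together with the Hodge decomposition available in class $\sC$ forces $N^1(X)_\bR=H^2(X,\bR)$, so the Picard number is $1$ and every effective divisor and every curve is a positive rational multiple of the contact line bundle $L$ (recall $-K_X=2L$).

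Next I would show $-K_X$ is ample. As $X$ is uniruled there is a rational curve $C$ with $-K_X\cdot C>0$, and since $N_1(X)_\bR\cong\bR$ all effective $1$-cycles lie on a single ray; thus $K_X$ is not nef and $\overline{NE}(X)$ is a single $K_X$-negative extremal ray. Invoking the cone and contraction theorems in the Moishezon (class $\sC$) category, the contraction of this ray is a morphism onto a normal space of Picard number zero, i.e.\ a point; hence $-K_X$ is ample and $X$ is a projective Fano threefold. I expect this to be the main obstacle, since it is precisely the place where the weak ``class $\sC$'' hypothesis must be converted into projectivity, and it relies on the analytic Mori theory for Moishezon threefolds rather than on the contact condition.

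The contact sequence $0\to F\to T_X\to L\to 0$ with $\det F=L$ and $c_1(T_X)=-K_X=2L$ gives, by the Whitney formula, $c_2(T_X)=c_2(F)+L^2$ and $c_3(T_X)=c_2(F)\cdot L$. Feeding this into Noether's formula $c_1(T_X)\cdot c_2(T_X)=24\,\chi(\sO_X)=24$ yields $2L\cdot(c_2(F)+L^2)=24$, that is $e(X)+L^3=12$ since $c_3(T_X)=e(X)$. Combined with $e(X)\le 4$ this gives the key numerical inequality $L^3\ge 8$ (note that this identity is purely topological, so in fact the inequality is available even before projectivity is established).

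Finally I would identify $X$. Write $\Pic(X)=\bZ H$ with $H$ ample and $-K_X=rH$; the Kobayashi--Ochiai theorem gives $r\le 4$, and since $L\in\Pic(X)=\bZ H$ satisfies $2L=rH$, the index $r$ must be even, so $r\in\{2,4\}$. If $r=4$ then $X\simeq\bP^3$ by Kobayashi--Ochiai. If $r=2$ then $L=H$ and $L^3=H^3\ge 8$, which is impossible because a del Pezzo threefold (Fano index two) of Picard number one has degree at most $5$. Therefore $X\simeq\bP^3$.
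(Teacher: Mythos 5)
Your reduction to the rationally connected case, the computation of $\Pic(X)\simeq\bZ$, and the Chern-class identity $e(X)+L^3=12$ giving $L^3\ge 8$ all coincide with the paper's argument. The genuine gap is exactly the step you flagged as "the main obstacle": the promotion of Moishezon to projective. Knowing that $\overline{NE}(X)$ is a single $K_X$-negative ray does \emph{not} yield a contraction in the projective sense, and the inference "contraction to a point $\Rightarrow$ $-K_X$ ample" is an application of Kleiman's criterion, which fails for non-projective Moishezon manifolds: a line bundle on such an $X$ can be big, effective, and positive on every curve without being ample (this is precisely the phenomenon studied in Koll\'ar's [Ko91, Ch.~5], whose whole point is to control non-projective Moishezon threefolds with $\Pic\simeq\bZ$ and $K_X$ negative). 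So your argument silently assumes the hardest part of the statement; once projectivity is granted, the problem was already solved by Ye [Ye94], as the paper notes. Your endgame (Kobayashi--Ochiai plus the degree bound $H^3\le 5$ for del Pezzo threefolds of Picard number one) is fine in the projective category but is unavailable here.

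The paper sidesteps projectivity entirely by quoting Koll\'ar--Nakamura for Moishezon threefolds with $\Pic\simeq\bZ$ and big effective generator $\sO_X(1)$: writing $K_X=\sO_X(m)$ with $m<0$ and even (since $K_X=-2L$ is divisible by $2$), one has $m\ge -4$ with $m=-4$ iff $X\simeq\bP^3$, so only $m=-2$ must be excluded; in that case Koll\'ar's theorem gives $h^0(\sO_X(1))\le 7$ and $h^2(\sO_X(1))=h^3(\sO_X(1))=0$ \emph{without} assuming projectivity. Riemann--Roch for $L=\sO_X(1)$ together with your inequality $L^3\ge 8$ then gives $\chi(L)=L^3+2\ge 10$, hence $h^0(L)\ge\chi(L)\ge 10$, contradicting $h^0\le 7$. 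If you want to salvage your route, you would either have to prove projectivity in the case $m=-2$ (which is not known by soft arguments) or replace your del Pezzo degree bound by the section bound $h^0(\sO_X(1))\le 7$, which is what the paper does.
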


In the projective case, this theorem has first been shown by \cite{Ye94}.

\section{Preliminaries}

We will make heavily use of the following theorem of Koll\'ar \cite{Ko91} and Nakamura.

\begin{proposition}\label{prop_theorem_Kollar_Nakamura}
 Let $X$ be a smooth Moishezon threefold with ${\rm Pic}(X) \simeq \bZ$ and let $\sO_X(1)$ be the big ( = effective) generator
of ${\rm Pic}(X).$ Write $K_X = \sO_X(m)$ with some integer $m$ and assume $m < 0.$
Then
\begin{enumerate}
\item \label{item_m_ge_minus_4}
  $m \geq -4 $ and $m = -4$ if and only if $X = \bP^3.$
\item $m = -3$ if and only if $X$ is the 3-dimensional quadric.
\item \label{item_m_eq_minus_2}
  If $m = -2$, then $h^0(X,\sO_X(1)) \leq 7.$
\item \label{item_H2_H3_eq_0}
   $H^2(X,\sO_X(1)) = H^3(X,\sO_X(1)) = 0.$
\end{enumerate}
\end{proposition}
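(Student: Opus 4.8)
The plan is to regard this statement as the Moishezon counterpart of Fujita's classification of Fano threefolds of large index, the decisive point being that Koll\'ar's work makes the cone and contraction theorems, the base-point-free theorem, and Kawamata--Viehweg vanishing available in the Moishezon category. Throughout I would write $H = \sO_X(1)$ and $r = -m > 0$, so that $-K_X = \sO_X(r)$ and $X$ is a Moishezon Fano threefold of Picard number one; bigness of $H$ is part of the hypothesis.

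The first step is to show that $H$ is nef. Since $-K_X = rH$ lies in the interior of the (one-dimensional) pseudoeffective cone, $K_X$ is not pseudoeffective, so by the cone theorem there is a $K_X$-negative extremal ray; as $\rho(X)=1$ this is all of $\overline{NE}(X)$, say spanned by the class of a curve $C$, and $-K_X \cdot C > 0$ forces $H \cdot C > 0$, i.e.\ $H$ is nef. With $H$ nef and big, item \eqref{item_H2_H3_eq_0} is immediate: writing $\sO_X(1) = K_X + \sO_X(1-m)$ with $1 - m \geq 2 > 0$, Kawamata--Viehweg vanishing gives $H^i(X, \sO_X(1)) = 0$ for all $i > 0$, which covers both $H^2$ and $H^3$. (The vanishing of $H^3$ also follows directly from Serre duality, since $K_X \otimes \sO_X(-1) = \sO_X(m-1)$ is a negative power of the effective generator.) Applying the same vanishing to $\sO_X = K_X + (-K_X)$ gives $\chi(\sO_X) = 1$.

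For the index bound in \eqref{item_m_ge_minus_4} I would invoke bend-and-break: the extremal ray carries a rational curve $C$ with $-K_X \cdot C \leq \dim X + 1 = 4$. Since $H \cdot C \geq 1$ and $-K_X \cdot C = r\,(H \cdot C)$, this yields $r \leq 4$, i.e.\ $m \geq -4$. For \eqref{item_m_eq_minus_2}, in the del Pezzo case $m = -2$ we have $-K_X = 2H$; the relation $\tfrac{1}{24}(-K_X)\cdot c_2 = \chi(\sO_X) = 1$ forces $H \cdot c_2 = 12$, and Riemann--Roch then collapses to $\chi(X, \sO_X(1)) = H^3 + 2$. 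As $h^0 = \chi$ by the vanishing just proved, the asserted bound $h^0(X, \sO_X(1)) \leq 7$ is exactly the degree bound $H^3 \leq 5$ for a del Pezzo threefold of Picard number one.

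I expect the main obstacle to be precisely the boundary cases: the characterizations $m = -4 \Leftrightarrow X \simeq \bP^3$ and $m = -3 \Leftrightarrow X$ a quadric, and the sharp inequality $H^3 \leq 5$ underlying \eqref{item_m_eq_minus_2}. These are where the soft cohomological and cone-theoretic inputs no longer suffice and one needs the full Kobayashi--Ochiai characterization together with a base-point-free analysis of the fundamental system $|H|$: for $m = -4$ one produces a line $C$ with $-K_X \cdot C = 4 = \dim X + 1$ and identifies $X$ with $\bP^3$, for $m = -3$ with the quadric, and for $m = -2$ one controls the degree through the geometry of $|H|$. Carrying these classification steps out with $\Pic(X) \simeq \bZ$ but without a priori projectivity is exactly the work done by Koll\'ar and Nakamura in the Moishezon setting.
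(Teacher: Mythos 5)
The paper offers no proof of Proposition~\ref{prop_theorem_Kollar_Nakamura} at all: it states the result and refers to Koll\'ar \cite{Ko91}, Thm~(5.3.4), Thm~(5.3.12) and Cor.~(5.3.9). Your proposal is therefore not competing with a written argument but reconstructing the strategy behind the cited results, and as a reconstruction it is a sensible outline: nef plus big gives vanishing and $h^0=\chi$, bend-and-break bounds the index by $4$, Riemann--Roch with $(-K_X)\cdot c_2=24\chi(\sO_X)$ reduces \ref{item_m_eq_minus_2} to $H^3\le 5$, and Kobayashi--Ochiai-type arguments handle the boundary cases $m=-4,-3$. You also honestly concede that the substantive classification steps are exactly Koll\'ar--Nakamura's work, which makes your treatment consistent in spirit with the paper's bare citation.

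One caveat is worth flagging, because it concerns the step you present as ``soft.'' Your derivation of nefness of $H$ from ``$\rho(X)=1$ and $-K_X\cdot C>0$ on the extremal ray'' is precisely where non-projectivity bites: on a non-projective Moishezon threefold the closed cone of curves inside $N_1(X)\simeq\bR$ can a priori be the whole line, i.e.\ there can be effective curves on which the big generator is \emph{negative} (this is the Hironaka phenomenon), and ruling this out --- together with establishing the cone theorem and Kawamata--Viehweg vanishing in the Moishezon category in the first place --- is itself a major part of what \cite{Ko91} proves. So the opening moves of your sketch already consume the substance of the reference rather than preceding it; the sketch is a fair road map of Koll\'ar's argument but is not self-contained, and like the paper it ultimately rests on the quoted theorems.
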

For the proof, see \cite[Thm~(5.3.4)]{Ko91}, \cite[Thm~(5.3.12)]{Ko91}, and \cite[Cor.~(5.3.9)]{Ko91}, respectively.

\vskip .2cm
Next we collect some basic properties of rationally connected manifolds. Recall that a compact manifold in class $\mathcal C$ is {\it rationally connected}
if two general points in $X$ can be joined by a chain of rational curves.
For the benefit of the reader we list the following well-known properties and include indications on the proof.

\begin{proposition}\label{prop_rationally_connected_in_class_C}
 Let $X$ be a rationally connected manifold in class $\mathcal C.$ Then the following holds.
\begin{enumerate}
\item $X$ is simply connected;
\item $H^q(X,\mathcal O_X) = 0$ for all $q \geq 1$; in particular $X$ is Moishezon.
\item ${\rm Pic}(X) $ does not have torsion; so if $b_2(X) = 1,$ then ${\rm Pic}(X) = \mathbb Z.$
\end{enumerate}
\end{proposition}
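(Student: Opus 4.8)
The plan is to reduce everything to the classical projective theory of rationally connected varieties by passing to a K\"ahler model and then transporting the conclusions back along the modification. Since $X$ lies in class $\sC$, I would first choose a modification $\pi \colon \hat X \to X$ with $\hat X$ a compact K\"ahler manifold; such a model exists because a bimeromorphic map from a K\"ahler manifold to $X$ can be resolved by blow-ups along smooth centres, which preserve the K\"ahler property. Two facts make the reduction work: rational connectedness is a bimeromorphic invariant, so $\hat X$ is again rationally connected; and, as $X$ is smooth, $\pi_* \sO_{\hat X} = \sO_X$ with $R^q \pi_* \sO_{\hat X} = 0$ for $q \geq 1$, so the Leray spectral sequence gives $H^q(X, \sO_X) \cong H^q(\hat X, \sO_{\hat X})$ for every $q$.

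To prove (2) I would argue on $\hat X$. By Hodge symmetry on the compact K\"ahler manifold $\hat X$ one has $\dim H^q(\hat X, \sO_{\hat X}) = \dim H^0(\hat X, \Omega^q_{\hat X})$, and a rationally connected manifold carries no nonzero holomorphic $q$-form for $q \geq 1$: a very free rational curve $f \colon \bP^1 \to \hat X$ through a general point has $f^* T_{\hat X}$ ample, so $f^* \Omega^q_{\hat X}$ is a sum of negative line bundles, and a global form would have to vanish along a covering family of such curves. Hence $H^0(\hat X, \Omega^q_{\hat X}) = 0$ and therefore $H^q(X, \sO_X) = 0$ for all $q \geq 1$. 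In particular $H^2(\hat X, \sO_{\hat X}) = 0$, so the Hodge structure on $H^2(\hat X, \bZ)$ is purely of type $(1,1)$ and a K\"ahler class can be perturbed to a rational, hence ample, one; by Kodaira $\hat X$ is projective, and since $X$ is bimeromorphic to the projective manifold $\hat X$ it is Moishezon.

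For (1) I would invoke the theorem of Campana and of Koll\'ar--Miyaoka--Mori that a smooth projective rationally connected variety is simply connected; applied to $\hat X$ this gives $\pi_1(\hat X) = 1$. A modification between smooth compact complex manifolds induces an isomorphism of fundamental groups, so $\pi_1(X) \cong \pi_1(\hat X) = 1$. Part (3) then follows formally: the exponential sequence together with $H^1(X, \sO_X) = H^2(X, \sO_X) = 0$ from (2) identifies $\Pic(X) \cong H^2(X, \bZ)$, while simple connectedness forces $H_1(X, \bZ) = 0$, so by the universal coefficient theorem $H^2(X, \bZ)$ is torsion-free. Thus $\Pic(X)$ is torsion-free of rank $b_2(X)$, and it equals $\bZ$ precisely when $b_2(X) = 1$.

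The step I expect to be most delicate is the transfer of the projective theory to the class $\sC$ setting underlying (2): one must check that a K\"ahler model dominating $X$ really exists and stays K\"ahler, and, more substantially, that the vanishing of holomorphic forms, whose only honest proof goes through very free rational curves, remains available on the K\"ahler (not yet known to be projective) model $\hat X$. Once this input is granted, the Leray comparison, Hodge symmetry, Kodaira's criterion and the exponential-sequence computation are all routine.
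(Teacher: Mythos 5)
Your proposal is correct and follows the same overall strategy as the paper: pass to a K\"ahler modification $\hat X \to X$, use rational curves with ample restricted tangent bundle to kill holomorphic forms, apply Hodge symmetry and Kodaira's projectivity criterion, and transport the conclusions back to $X$. The differences are minor but worth noting. For (2) the paper works directly on $X$, producing a rational curve $C \subset X$ with $T_X\vert_C$ ample and invoking Hodge duality on the class $\sC$ manifold $X$ itself, whereas you do the Hodge-theoretic step on the honest K\"ahler model $\hat X$ and transfer via $R^q\pi_*\sO_{\hat X}=0$ and Leray; your route avoids relying on Hodge theory for class $\sC$ manifolds at the cost of the (standard) vanishing of higher direct images under a composition of smooth blow-ups. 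For (1) the paper applies Campana's theorem for compact K\"ahler rationally connected manifolds directly, so it does not need to first establish projectivity of $\hat X$ as you do; both orderings work. The most genuine divergence is in (3): the paper argues that a torsion line bundle would produce a nontrivial finite \'etale cover, contradicting simple connectedness, while you identify $\Pic(X)$ with $H^2(X,\bZ)$ via the exponential sequence and deduce torsion-freeness from $H_1(X,\bZ)=0$ by universal coefficients; your version has the small advantage of also delivering the rank statement $\Pic(X)\cong\bZ$ when $b_2=1$ in one stroke. The point you flag as delicate --- the existence of very free (or at least ample-restricted-tangent-bundle) rational curves on a rationally connected manifold that is only known to be in class $\sC$ --- is exactly the point the paper also only asserts, citing \cite[IV.3.7]{Ko96} with the remark that the proof extends; so your caution is well placed but does not indicate a gap relative to the paper.
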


\begin{proof}
(1) We refer to  \cite[Cor.~5.7]{Ca94}. Notice that in \cite{Ca94}, the manifold is supposed to be K\"ahler. Since however $X$ is bimeromorphically equivalent to a K\"ahler manifold, we may choose a birational
holomorphic map $\hat X \to X$ with $\hat X$ K\"ahler, given by a sequence of blow-ups with smooth centers. Then we apply Campana's  theorem on $\hat X$ and use the basic fact $\pi_1(\hat X) = \pi_1(X)$
(it suffices to check that for a single blow-up along a submanifold). \\
(2) Since $X$ is rationally connected, there exists a rational curve $C \subset X$ such that the tangent bundle $T_X \vert C$ is ample, see \cite[IV.3.7]{Ko96}
(the proof works for manifolds in class $\mathcal C$ as well).
From this fact it follows easily
$$ H^0(X,\Omega^q_X) = 0,$$
hence by Hodge duality $H^q(X,\mathcal O_X) = 0$ for $q > 0.$
We refer to \cite[IV.3]{Ko96} for details.  \\
In order to show that $X$ is Moishezon, observe that $H^q(\hat X,\mathcal O_{\hat X}) = 0$ for positive $q$, in particular $H^2(\hat X, \mathcal O_{\hat X}) = 0.$
Thus by Kodaira's classical theorem $\hat X$ is projective and therefore $X$ is Moishezon.
\\
(3) Suppose ${\rm Pic}(X)$ contains a torsion element. Thus there is a non-trivial line bundle $M$ such that $M^{\otimes m} \simeq \mathcal O_X$ for some
positive number $m$. As a consequence, there is a finite \'etale cover $f: \tilde X \to X$ such that $f^*(M) \simeq \mathcal O_{\tilde X}.$ This contradicts the
simply connectedness of $X$.
\end{proof}

\section{Proof of the theorem}

To start the proof of the main theorem, we first  observe that $X$ is uniruled, see \cite[Thm~2.2]{FP11}.
Furthermore, $X$ is rationally connected,
for otherwise by the main Theorem in \cite{FP11}
 $X$ is isomorphic to $\bP(T_M)$
 with a K\"ahler surface $M$ and $b_2(X) \geq 2$, a contradiction to our assumption.
In particular by Proposition 2.2, $X$ is Moishezon,
simply connected and ${\rm Pic }X \simeq \bZ$.
Let $\sO_X(1)$ be the effective generator of ${\rm Pic}(X). $
Since the canonical  line bundle  $K_X$ is divisible by  $2$ by \eqref{equ_minus_KX_eq_2L}, we have
$$ K_X = \sO_X(m)$$
with an even integer $m$.
Since $X$ is uniruled, $m$ must be negative, see \cite[Thms~(5.3.2) and (5.3.3)]{Ko91}.
Applying Proposition~\ref{prop_theorem_Kollar_Nakamura}\ref{item_m_ge_minus_4},
   we simply have to exclude  $m = -2.$
So suppose $m = -2$, in other words the contact line bundle $L =  \sO_X(1)$.
We will arrive at a contradiction with \ref{item_m_eq_minus_2} of Proposition~\ref{prop_theorem_Kollar_Nakamura}
by calculating the number of sections $h^0(X,L)$.

Since $c_3(X)$ is the Euler characteristic of $X$, we have $c_3(X) = b_0 - b_1 + b_2 - b_3 + b_4 - b_5 + b_6$
 with $b_0 = b_6 = 1,$
 $b_1 = b_5 = 0$ ($X$ being simply connected),
 and $b_2 = b_4 = 1$ by our assumption).
 Hence
 \begin{equation}\label{equ_c3X_le_4}
     c_3(X) = 4 - b_3 \leq 4.
 \end{equation}
Since the contact form gives an isomorphism $\Wedge{2}F = L$, we have $c_1(F) = L$.
From the short exact sequece $ 0 \to F \to TX \to L \to 0$ we obtain
$$
  (1+c_1(X) + c_2(X) + c_3(X)) = (1 + L + c_2(F))(1+ L)
$$
In degrees $3$ and $2$ we obtain, respectively:
\begin{align}
 c_3(X) &= c_2(F).L \quad \text{ and } \label{equ_c_3_eq_c_2_F_dot_L}\\
 c_2(X) &= c_2(F) + L^2.               \label{equ_c_2_eq_c_2_F_plus_L_square}
\end{align}
The Riemann-Roch-Hirzebruch \cite[\S XIX.4]{Pa65}
   formula for $\sO_X$ gives:
\[
   \underbrace{\chi(\sO_X)}_{=1 \text{ by Prop.~\ref{prop_rationally_connected_in_class_C}}}
   = \frac{1}{24} \underbrace{c_1(X)}_{= 2L \text{ by Prop.~\eqref{equ_minus_KX_eq_2L}}}.c_2(X)
\]
and thus:
\begin{align}
   &L.c_2(X)  = 12, \text{ and} \label{equ_L_c_2}\\
   &L^3 \stackrel{\text{by \eqref{equ_c_2_eq_c_2_F_plus_L_square}}}{=}
        \underbrace{L.c_2(X)}_{=12 \text{ by~\eqref{equ_L_c_2}}}
        - \underbrace{L.c_2(F)}_{= c_3(X) \text{ by \eqref{equ_c_3_eq_c_2_F_dot_L}}} =
      12 - c_3(X) \stackrel{\text{by \eqref{equ_c3X_le_4}}}{\geq} 8 \label{equ_L_cube}.
\end{align}
Now Riemann-Roch-Hirzebruch for $L$ reads:
\begin{align*}
 \chi(L) & = \frac{1}{3!} L^3 +  {\frac{1}{2}} L^2. \underbrace{\frac{-K_X}{2}}_{=L \text{ by \eqref{equ_minus_KX_eq_2L}}}
                             +  L.\frac{(-{K_X})^2 + c_2(X)}{12} + \underbrace{\chi(\mathcal O_X)}_{=1 \text{ by Prop.~\ref{prop_rationally_connected_in_class_C}}}  =\\
         & = \frac{1}{6} L^3 +  {\frac{1}{2}} L^3
                             + {\frac{1}{3}} L. \underbrace{\left(\frac{-K_X}{2}\right)^2}_{=L^2
                                                         \text{ by \eqref{equ_minus_KX_eq_2L}}}
                              + \underbrace{\frac{L.c_2(X)}{12}}_{= 1 \text{ by \eqref{equ_L_c_2}}} + 1 =\\
         & = \frac{1}{6} L^3 +  {\frac{1}{2}} L^3 + \frac{1}{3}L^3 + 1 + 1 =\\
         & = L^3  + 2  \geq 10 \text{ (by \eqref{equ_L_cube})}.
\end{align*}
Since $h^2(L) = h^3(L)= 0$ by Proposition~\ref{prop_theorem_Kollar_Nakamura}, \ref{item_H2_H3_eq_0},
   we have $h^0(L) = \chi(L) + h^1(L) \geq 10$.
This contradicts part \ref{item_m_eq_minus_2} of Proposition~\ref{prop_theorem_Kollar_Nakamura}.

\subsection*{Acknowledgements}
The first named author was supported by a Maria Sk\l{}o\-do\-wska-Curie Outgoing Fellowship "Contact Manifolds." He also
would like to thank the University of Bayreuth for invitation, support of his visit and providing a nice and stimulating atmosphere for research.

\vskip 1cm

{\small
\begin{center}
\noindent\begin{tabular}{lcl}
Jaros\l{}aw Buczy\'nski && Thomas Peternell \\
\verb|jabu@mimuw.edu.pl| && \verb|thomas.peternell@uni-bayreuth.de|\\
Institut Fourier, Univ.~Grenoble I  && Mathematisches Institut \\
100~rue des Maths, BP~74 && Universit\" at Bayreuth\\
38402 St~Martin d'H\`eres, France && D-95440 Bayreuth, Germany \\
\textbf{and}&& \\
 Institute of Mathematics&& \\
 Polish Academy of Sciences && \\
ul. \'Sniadeckich 8,  P.O. Box 21,&& \\
00-956 Warszawa, Poland &&\\
\end{tabular}
\end{center}}

\end{document}